\title{The Limiting Behavior of the FTASEP with Product Bernoulli Initial Distribution}
\author{Dayue Chen\footnote{Department of Probability and Statistics, School of Mathematical Sciences, Peking University, Bei-
jing, China, {\it E-mail}: dayue@pku.edu.cn},~~  Linjie Zhao\footnote{{\it E-mail}: zhaolinjie@pku.edu.cn}}
\date{\tiny \today}
\begin{document}

\setlength{\baselineskip}{18pt}

\maketitle

\begin{abstract}
We study the facilitated totally asymmetric exclusion process on the one dimensional integer lattice. We investigate the invariant measures and the limiting behavior of the process. We mainly derive the limiting distribution of the process when the initial distribution is the Bernoulli product measure with density $1/2$. We also prove that in the low density regime, the system finally converges to an absorbing state.

\begin{flushleft}
\textbf{Keywords.} facilitated exclusion,  invariant measure, limiting distribution, freezing time.
\end{flushleft}
\end{abstract}

\newtheorem{thm}{Theorem}
\newtheorem{pro}[thm]{Proposition}
\newtheorem{cor}[thm]{Corrolary}
\newtheorem{defi}[thm]{Definition}
\newtheorem{lem}[thm]{Lemma}
\newtheorem{conjec}[thm]{Conjecture}

\section{Introduction}

The exclusion process plays the role of a paradigm in non-equilibrium statistical mechanics. The model describes many systems, including traffic \cite{KlauckSchadschneider99}, ionic conductor \cite{Richards77}, and RNA transcription \cite{MacDonald68}. Mathematically, it was first introduced by Spitzer \cite{spitzer70} as a model of a lattice gas. The exclusion process has been extensively studied since then (see e.g. \cite{liggett85,liggett99}). The dynamics is as follows: Particles do simple random walks according to the exclusion rule, which means that there is at most one particle per site.

A particular case of one dimensional exclusion process is the totally asymmetric simple exclusion process (TASEP). Independently, each particle jumps with rate $1$ to the right neighboring site, provided it is empty. Despite its simple structure, the TASEP is closely related to the corner growth model \cite{Rost81} and the KPZ theory \cite{Johansson00}.

In this paper, we consider the facilitated totally asymmetric simple exclusion process (\mbox{FTASEP}) on $\mathbb{Z}$, a variation of the TASEP. The model is defined with a dynamics where a particle from an occupied site hops to the right neighboring vacant site stochastically if the left one is occupied. The state space of the FTASEP is $\mathbf{X} = \{0,1\}^{\mathbb{Z}}$. For $\eta \in \mathbf{X}$, $\eta (x) = 1$ means that there is a particle at site $x$ while $\eta (x) = 0$ means that there is a hole at site $x$. The infinitesimal generator of the FTASEP is
\begin{equation}
\mathcal{L} f (\eta) = \sum_{x \in \mathbb{Z}} \eta (x-1) \eta (x) [1 - \eta (x+1)] [f (\eta^{x,x+1}) - f (\eta)],
\end{equation}
where $f$ is a cylinder function (depending  on  only finitely many coordinates) and
\begin{equation}
\eta^{x,x+1} (y) =
\begin{cases}
\eta (x+1) &\text{if $y = x$},\\
\eta (x) &\text{if $y = x+1$},\\
\eta (y) &\text{otherwise}.\\
\end{cases}
\end{equation}

The model describes the motion in glasses: The particle moves slower as the local density increases (the exclusion rule), but needs a stimulus to move (the facilitated rule). It was first introduced in \cite{RossiPastorVespignani00} as a conserved lattice gas model which undergoes a continuous phase transition to an absorbing state at a critical value of the  particle density.  The model has been studied analytically and numerically in the physics literature \cite{BasuMohanty09,GabelKrapivskyRedner10}, and recently has got mathematicians' attention from different aspects \cite{BaikBarraquandCorwinToufic17,Blondel etal18}.

An important issue is to consider the invariant measures of the FTASEP. While it is hard to characterise all of the invariant measures for the process, there exist nontrivial invariant measures when the density is above $1/2$. For the process on the one dimensional ring, when the particle density $\rho > 1/2$, the uniform measure on the family of maximal-island configurations (the configurations with no adjacent zeros) is the unique invariant measure \cite{GabelKrapivskyRedner10}. The invariant measure of the model  can also be calculated by using the matrix product ansatz \cite{BasuMohanty09}.   Our first result shows that depending on whether the density $\rho > 1/2$ or not, the invariant measures of the process on $\mathbb{Z}$ exhibit different behaviors.

We now discuss a little ergodic theory (see e.g. Section 4, Chapter 1, \cite{liggett85}). For $x \in \mathbb{Z}$, define the shift transformation $\tau_x$ on $\mathbf{X}$ by
$$
( \tau_x \eta ) (y) = \eta (x + y).
$$
In a natural way, these induce shift transformations on the space of all functions  on  $\mathbf{X}$ via
$$
( \tau_x f ) (\eta) = f ( \tau_x \eta ),
$$
and then on the space of distributions on  $\mathbf{X}$ via
$$
\int f d (\tau_x \mu) = \int (\tau_x f) d \mu.
$$
The distribution $\mu$ on $\mathbf{X}$ is said to be translation invariant if $\tau_x \mu = \mu$ for all $x \in \mathbb{Z}$. The translation invariant measure $\mu$ is said to be (spatial) ergodic if whenever $\tau_x f = f$ for all $x \in \mathbb{Z}$, it follows that $f$ is constant a.s. relative to $\mu$.

For a translation invariant  measure $\mu$ on $\mathbf{X}$, we say the measure $\mu$ has density $\rho$ if
\begin{equation}
\mu \{\eta: \eta (x) = 1 \} = \rho,
\end{equation}
and we say $\mu$ is {\it degenerate} if
\begin{equation}
\mu \{\eta: \eta (x) = \eta (x+1) = 1 \} = 0,
\end{equation}
where the left-hand sides of the above two equations do not depend on $x$ by translation invariance. Note that if $\eta$ is sampled from a degenerate distribution $\mu$ and the process is started from $\eta$, then the process will be trapped in the configuration $\eta$ forever.

\begin{thm}\label{thm:properties4}
(1) For $1/2 < \rho < 1$, there exists a family of (spatial) ergodic non-degenerate measures invariant for the process with density $\rho$.\\
(2) For $0 < \rho \leq 1/2$, there are no (spatial) ergodic non-degenerate measures invariant for the process with density $\rho$.
\end{thm}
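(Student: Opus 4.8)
My plan is to treat the two regimes by entirely different mechanisms: an explicit construction for (1), and a hierarchy of stationarity relations together with ergodicity for (2).

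For part (1) I would produce, for each $\rho\in(1/2,1)$, an explicit candidate, namely the law $\mu_\rho$ of the stationary two–state Markov chain $(\eta(x))_{x\in\mathbb Z}$ with transition matrix $P(0,0)=0,\ P(0,1)=1,\ P(1,0)=p,\ P(1,1)=1-p$, where $p=(1-\rho)/\rho\in(0,1)$. Its stationary vector is $(\pi_0,\pi_1)=(p/(1+p),\,1/(1+p))$, so $\mu_\rho\{\eta(x)=1\}=1/(1+p)=\rho$. This measure is translation invariant by stationarity, ergodic (indeed mixing) because the chain is irreducible and aperiodic, and non-degenerate since $\mu_\rho\{\eta(x)=\eta(x+1)=1\}=\pi_1(1-p)=2\rho-1>0$. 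The substantive point is invariance under $\mathcal L$. Here I would first observe that the dynamics preserves the event ``no two adjacent holes'': the only move $110\to101$ turns the emptied site $x$ into a hole flanked by two particles, so it never creates a $00$ pattern, and $\mu_\rho$ is supported on such configurations. On this set a configuration is a bi-infinite string of particle-blocks separated by single holes, and encoding the block lengths (minus one) as occupation numbers turns $\mathcal L$ into the generator of the constant-rate totally asymmetric zero-range process, whose product-geometric measures are the classical invariant measures. I would then identify $\mu_\rho$ as precisely the shift-invariant (stationary-renewal, i.e. Palm) measure corresponding to that product-geometric law, so that $\int\mathcal L f\,d\mu_\rho=0$; alternatively one verifies this identity directly from the Markov property on cylinder functions.

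For part (2), suppose toward a contradiction that $\mu$ is ergodic, translation invariant, invariant under $\mathcal L$, of density $\rho\le 1/2$, and non-degenerate; for a finite word $w=w_0\cdots w_{n-1}$ write $[w]=\mu\{\eta(i)=w_i,\ 0\le i<n\}$. Two ingredients drive the argument. First, an algebraic identity valid for every translation invariant measure, $[11]-[00]=2\rho-1$, so for $\rho\le1/2$ we get $[00]\ge[11]$; in particular if the ``$11$'' density $\beta:=[11]$ is positive then so is the ``$00$'' density $\gamma:=[00]$. Second, I would extract from stationarity an infinite family of forbidden patterns. Since a $00$ pair can only be destroyed and never created by the dynamics, testing invariance against the indicator of $00$ at a fixed bond gives $\int\mathcal L\,\mathbf 1[00]\,d\mu=-[1100]$, whence $[1100]=0$. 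Iterating, I would prove by induction that every pattern $F_k:=11(01)^k00$ has $\mu$-measure zero: for each $k$ one enumerates all moves creating or destroying $F_{k-1}$ at a fixed location, checks that its destruction rate is exactly $[F_{k-1}]$ (the unique enabled move sits at the single $110$ of $F_{k-1}$), and checks that $[F_k]$ occurs as one of the nonnegative creation terms (from the move filling the leftmost site, which turns $F_k$ into $F_{k-1}$); then $\int\mathcal L\,\mathbf 1[F_{k-1}]\,d\mu=0$ together with $[F_{k-1}]=0$ forces every creation term, in particular $[F_k]$, to vanish.

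With the family $[F_k]=0$ in hand I would finish by an ergodic argument. The vanishing of all $F_k$ means that, almost surely, there is no $00$ pair anywhere to the right of any $11$ pair: otherwise, taking the leftmost $00$ to the right of a fixed $11$ and the rightmost $11$ before it, the intervening string is forced to alternate and exhibits some $F_k$ of positive measure. On the other hand, if $\beta>0$ then $\gamma\ge\beta>0$, so by ergodicity almost every configuration contains both infinitely many $11$ pairs and infinitely many $00$ pairs extending to $+\infty$; choosing a $11$ pair with a $00$ somewhere to its right contradicts the previous sentence. Hence $\beta=0$, that is $\mu$ is degenerate, which is the claim. The main obstacle in (1) is the rigorous identification of $\mu_\rho$ with the product-geometric zero-range measure through the stationary-renewal correspondence and the resulting transfer of invariance (the Palm/size-biasing bookkeeping at the origin), which can however be sidestepped by a direct cylinder computation. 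The crux of (2) is the inductive step $[F_{k-1}]=0\Rightarrow[F_k]=0$: the genuine work is the exhaustive enumeration of the creating and destroying moves of $F_{k-1}$ and the verification of their rates, rather than the ergodic endgame.
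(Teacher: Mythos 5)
Your proposal is correct and follows essentially the same route as the paper: in part (1) your stationary two‑state Markov chain with $P(0,1)=1$, $P(1,1)=(2\rho-1)/\rho$ is exactly the renewal measure with geometric gaps that the paper constructs (and whose ergodicity it likewise derives from the mixing of that chain), and in part (2) you use the same chain of steps — $\mu(00)\ge\mu(11)$, the generator identity forcing $\mu(1100)=0$ and inductively $\mu(11(01)^k00)=0$, and then ergodicity to produce a $00$ pair to the right of any $11$ pair, forcing $\mu(11)=0$. The only cosmetic difference is that the paper cites the invariance of $\mu_\rho$ from the literature rather than verifying it via the zero‑range/Palm correspondence you sketch.
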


\noindent {\bf Remark.} For $1/2 < \rho < 1$, the existence of such measures was observed in \cite{BaikBarraquandCorwinToufic17}. Our contribution is to prove the second statement of the theorem.

Another question is to consider the behavior of the process when the initial distribution of the process is the Bernoulli product measure with density $\rho$. For the facilitated exclusion on the one dimensional ring, it is easy to see that when the density of particles is below $1/2$, then the system will finally converges to an absorbing state \cite{BasuMohanty09}. Our second result shows that this is also the case for the  process on $\mathbb{Z}$.

To be specific, define the {\it freezing time} $\tau_x$ at site $x$ by
\begin{equation}
\tau_x := \inf \{s: \eta_t (x) = \eta_s (x), \forall t \geq s\}.
\end{equation}
Let $\nu_\rho,  \rho \in [0,1]$, be the product measure on $\{0,1\}^{\mathbb{Z}}$ with marginal given by
\begin{equation}
\nu_\rho \{ \eta: \eta (x) = 1 \} = \rho,~~x \in \mathbb{Z}.
\end{equation}
For a probability measure $\mu$ on $\mathbf{X}$, by $\mathbf{P}_{\mu}$ and $\mathbf{E}_\mu$ denote the probability measure and the expectation on the space $D ([0,\infty),\mathbf{X})$ induced by the FTASEP $\eta_t$ and the initial distribution of the process $\mu$.

\begin{thm}\label{thm:properties1}
For the FTASEP on $\mathbb{Z}$, the following dichotomy holds:
\begin{equation}
\mathbf{P}_{\nu_\rho} (\tau_0 < \infty) =
\begin{cases}
1 &\text{if $0 < \rho < 1/2$},\\
0 &\text{if $1/2 \leq \rho < 1$}.
\end{cases}
\end{equation}
\end{thm}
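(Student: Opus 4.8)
The plan is to recast the FTASEP in terms of the spacings between consecutive particles, where the dynamics becomes a conservative hole-transport process, and then to read off a single random walk whose drift changes sign exactly at $\rho=1/2$. Label the particles in increasing order, let $g_i$ be the distance between the $i$-th and $(i+1)$-th particle, and set $n_i:=g_i-1\ge 0$ (the number of holes between them). A direct check shows that the elementary move $110\to 101$ is equivalent, in these variables, to transporting one hole from gap $i$ to gap $i-1$ at rate one whenever $n_{i-1}=0$ and $n_i\ge 1$; in particular the number of holes in any block of gaps is conserved, the $n_i$ are i.i.d.\ geometric with mean $(1-\rho)/\rho$ under $\nu_\rho$, and the frozen configurations (those with no occurrence of $110$) are exactly those in which $\{i:n_i=0\}$ is a right-infinite ray, i.e.\ isolated particles on the left and at most one semi-infinite block on the right. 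The event $\{\tau_0<\infty\}$ thus amounts to the stabilisation of the gaps around the origin, which by conservation of mass is governed by the total current of particles across a fixed bond to the left of the origin.

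Two elementary facts drive the reduction. First, since particles only move right and a jump at site $x$ requires $\eta(x-1)=\eta(x)=1,\ \eta(x+1)=0$, sites to the right of a bond can only delay, never trigger, an emission across it; hence the current across any bond is dominated by the current out of the \emph{autonomous} left half-line run with a free right boundary. Second, I would record the pair-count identity: the number of $11$-pairs inside a finite window changes only through pairs entering at its left edge, pairs leaving at its right edge, and internal annihilations, so the number of jumps ever performed at the origin is bounded by the initial number of pairs in a window plus the number of pairs that ever enter it from the left. Consequently $\tau_0<\infty$ as soon as one exhibits a bond to the left of the origin across which only finitely many particles ever pass. The natural bookkeeping device is the surplus walk $W_m:=\sum_{x=1}^{m}\bigl(1-2\eta(x)\bigr)$, a random walk with i.i.d.\ increments of mean $1-2\rho$ (equivalently $\sum_{j\le i}(n_j-1)$, mean $(1-2\rho)/\rho$), which measures the local excess of holes over particles.

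For $0<\rho<1/2$ the walk $W$ has strictly positive drift. I would use its record/running-minimum structure to locate, a.s.\ infinitely often to the left of the origin, an index at which the holes already present suffice to isolate every particle further left, so that no particle ever needs to be pushed across the corresponding bond. Fixing one such blocking bond, the autonomous left half-line evolves on its own, freezes into isolated particles, and emits only finitely many particles across that bond; by the pair-count identity the origin then performs only finitely many jumps, giving $\tau_0<\infty$ $\nu_\rho$-a.s.

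For $1/2\le\rho<1$ the increments of $W$ have non-positive mean. When $\rho>1/2$ the drift is strictly negative, holes are in permanent deficit, no blocking bond can persist, and $110$-patterns must reform and be driven through the origin forever, so $\tau_0=\infty$; here one may alternatively invoke the non-degenerate ergodic invariant measures of Theorem~\ref{thm:properties4} to rule out extinction of activity. The genuinely delicate point, which I expect to be the main obstacle, is the critical case $\rho=1/2$: then $W$ is a mean-zero, null-recurrent walk, holes are only marginally sufficient, and one must show that its almost sure oscillation between $\pm\infty$ forces local deficits to recur and thereby keeps the origin flipping for all time. Because facilitated exclusion is not attractive, the usual monotone-coupling shortcuts are unavailable, so this recurrence has to be extracted directly from the conserved-quantity and random-walk structure above rather than from any comparison argument.
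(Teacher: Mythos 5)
Your subcritical argument is, underneath the change of variables, essentially the paper's own: your surplus walk $W_m=\sum_{x=1}^{m}(1-2\eta_0(x))$ is exactly the initial height function $h(0,\cdot)$ used in Section \ref{sec3:properties}, and your ``index at which the holes already present suffice to isolate every particle further left'' is precisely a record of that walk. The paper makes persistence airtight with a short deterministic lemma (Lemma \ref{lem:properties1}): a record site is empty and remains a record for all later times, proved by examining the first instant the record property could fail and noting the generator cannot produce that transition. By contrast, your route to persistence goes through the claim that the current across a bond is \emph{dominated} by the current of the left half-line run with a free right boundary. That is a monotone-coupling statement, and, as you yourself concede at the end, facilitated exclusion is not attractive; the occupancy of the site just to the right of the bond genuinely feeds back into the emission rate across it, so this domination is not obvious and would need its own proof. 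The record lemma sidesteps the issue entirely.

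The real gap is the half of the dichotomy you leave open. For $\rho=1/2$ you explicitly defer the argument (calling it ``the main obstacle''), and for $\rho>1/2$ the assertion that ``no blocking bond can persist, and $110$-patterns must be driven through the origin forever'' is not a proof; nor does invoking the invariant measures of Theorem \ref{thm:properties4} help, since the process here starts from $\nu_\rho$ rather than from $\mu_\rho$, and nothing ties the two laws together. The paper closes both cases at once: for $1/2\le\rho<1$ one has $\limsup_{x\to-\infty}h(0,x)=\infty$ almost surely (by recurrence of the mean-zero walk at criticality, and by drift above it), and then a propagation argument shows that once some site $\mathcal{X}_M<0$ carries initial height $M$, the level $M$ is transported rightward through $\mathcal{X}_M+2,\mathcal{X}_M+4,\ldots$ in finite time, so that $h(t,0)\to\infty$; since $h(t,0)=2N_t$ counts particles crossing the bond $(0,1)$, infinitely many particles pass the origin and $\tau_0=\infty$. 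Some such mechanism converting the oscillation of the initial profile into unbounded current at the origin is exactly what your sketch is missing, and it is the heart of the theorem.
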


In the critical case $\rho = 1/2$, we calculate the limiting distribution explicitly.  Let ${\bm \eta^1}$ (${\bm \eta^0}$ resp.) be the {\it alternative configuration} with particles placed on odd (even resp.) sites: ${\bm \eta^1} (x) = 1$ (${\bm \eta^0} (x) = 1$ resp.) if and only if $x$ is odd (even resp.). For $\eta \in \mathbf{X}$, let $\delta_\eta$ be the Dirac measure concentrated on the configuration $\eta$. Denote by $S (t)$ the semigroup corresponding to the generator $\mathcal{L}$, then $\mu S (t)$ is the distribution of the process at time $t$ when the initial distribution is $\mu$.

\begin{thm}\label{thm:properties3}
For the FTASEP on $\mathbb{Z}$,
\begin{equation}
\lim_{t \rightarrow \infty} \nu_{1/2} S(t) = \frac{1}{2} \delta_{{\bm \eta^1}} + \frac{1}{2} \delta_{{\bm \eta^0}}.
\end{equation}
\end{thm}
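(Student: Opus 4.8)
The plan is to pass to a dual ``domain wall'' representation in which the dynamics becomes an explicit annihilating system, and then to analyze that system through the recurrence of a symmetric random walk.

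First I would attach to each bond $(x,x+1)$, $x\in\mathbb{Z}$, a wall variable that records whether $\eta(x)=\eta(x+1)=1$ (a ``$+$ wall''), $\eta(x)=\eta(x+1)=0$ (a ``$-$ wall''), or $\eta(x)\neq\eta(x+1)$ (no wall). The two target configurations $\bm\eta^1,\bm\eta^0$ are exactly the two configurations carrying no walls at all. A direct inspection of the elementary move $110\to101$ shows that, in these variables: (a) a $+$ wall at $(x-1,x)$ that is followed by a hole is transported to the right (e.g. $1101\to1011$); (b) when a $+$ wall meets a $-$ wall immediately to its right the two annihilate ($1100\to1010$); (c) a $-$ wall is otherwise stationary; and (d) the number of walls of each species is non-increasing in time and decreases only when a $+$ and a $-$ wall annihilate together, so that the signed wall count is conserved up to flux across the boundary of any interval.

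Next I would record the initial law: under $\nu_{1/2}$ each bond is a $+$ wall with probability $1/4$, a $-$ wall with probability $1/4$, and no wall with probability $1/2$, so the two species have equal density and the signed wall sequence is, up to the mild $1$-dependence from shared sites, a mean-zero walk with increments in $\{-1,0,+1\}$. The heart of the argument is a clearing estimate: every wall annihilates almost surely in finite time, and for each fixed $L$ the probability that any wall lies in $[-L,L]$ at time $t$ tends to $0$ as $t\to\infty$. Because $+$ walls travel only to the right and annihilate with the static $-$ walls while preserving their order, reading the walls from left to right reduces the annihilation pattern to the non-crossing matching of a balanced $\pm$ sequence; the recurrence of the associated symmetric walk guarantees that each individual wall is matched almost surely, while the ballistic rightward motion of the $+$ walls prevents them from accumulating in a fixed window. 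I expect this clearing estimate to be the main obstacle, as it demands quantitative control of the annihilation front and a careful justification that the wall picture is an exact reformulation of the dynamics (including long runs of $1$'s, which carry several adjacent $+$ walls), rather than merely the qualitative statement that surviving walls have density zero.

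Granting the clearing estimate, I would conclude as follows. For any finite window the event that $\eta_t$ restricted to it is wall-free, hence a segment of $\bm\eta^1$ or of $\bm\eta^0$, has probability tending to $1$; letting the window exhaust $\mathbb{Z}$ shows that every subsequential weak limit of $\nu_{1/2}S(t)$ is supported on the only two globally alternating configurations $\bm\eta^1,\bm\eta^0$. Finally, the shift $\tau_1$ preserves $\nu_{1/2}$ and commutes with the semigroup $S(t)$ while interchanging $\bm\eta^1$ and $\bm\eta^0$; hence any such limit is invariant under $\tau_1$ and must give equal mass to the two configurations, yielding $\lim_{t\to\infty}\nu_{1/2}S(t)=\tfrac12\delta_{\bm\eta^1}+\tfrac12\delta_{\bm\eta^0}$.
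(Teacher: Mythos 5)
Your domain-wall reformulation is correct as far as it goes: the elementary move $110b\to 101b$ does transport a $+$ wall two bonds to the right when $b=1$ and annihilate a $+/-$ pair when $b=0$; $+$ walls cannot overtake one another (only the rightmost wall of a block of $1$'s can move); $-$ walls are static; and your final symmetry step (any subsequential limit is supported on $\{{\bm \eta^0},{\bm \eta^1}\}$ and is $\tau_1$-invariant, hence the equal mixture) is essentially what the paper does. The route differs from the paper's, which labels the holes and maps the FTASEP to a totally asymmetric zero-range process $\xi_t(i)=H_i(t)-H_{i-1}(t)-1$ in which a site fires at rate $1$ iff $\xi\ge 2$; in that language your $-$ walls are the sites with $\xi=0$ and your $+$ walls are the excess particles at sites with $\xi\ge 2$.

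The genuine gap is the clearing estimate, which you flag and then grant. One half of it is soft: a $00$ pair at a fixed bond can never be created (a particle enters a site only from its occupied left neighbour, and one checks $11\to10\to01$ with no transition into $00$), so $\mathbf{P}(\eta_t(x)=\eta_t(x+1)=0)$ is non-increasing, and its limit is the probability that the pair persists forever, which is $0$ by recurrence of the mean-zero walk recording the surplus of particles over holes to the left of $x$. The other half --- that $11$ pairs also vacate any fixed window --- is exactly where your sketch (non-crossing matching, finitely many arcs over a fixed bond, each $+$ wall actually reaching its partner in finite time) would need substantial work to become a proof, and as written the argument is conditional on this unproved lemma. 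The paper sidesteps the front analysis entirely with a first-moment identity: for any translation-invariant law of density $1/2$ one has $p_t(01)=p_t(10)$ and hence $p_t(11)=\tfrac12-p_t(10)=\tfrac12-p_t(01)=p_t(00)$, so the density of $+$ walls equals the density of $-$ walls at every time and the hard half of your estimate follows for free from the easy half. (In the zero-range picture this is the conservation law $\mathbf{E}[\xi_t(0)]\equiv 1$ combined with $\mathbf{P}(\xi_t(0)=1)\ge 1-2\,\mathbf{P}(\xi_t(0)=0)$.) Replacing your ballistic-annihilation analysis by this observation would close the gap with a two-line argument.
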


\noindent {\bf Remark.} (1) Theorem \ref{thm:properties1} says that in the critical regime, the system remains active forever. However, the system is absorbed finally in the sense of distribution by the above theorem.
(2) In the subcritical case, we show how to derive the limiting distribution by induction in Section \ref{sec5:properties}, while we are not able to write down the limiting distribution explicitly.
(3) The limiting distribution of the process in the supercritical regime needs to be further investigated.

The paper is organised as follows.  In Section \ref{sec2:properties}, we mainly prove the second statement of Theorem \ref{thm:properties4} by generator calculation and the ergodicity of the measure. Theorem \ref{thm:properties1} is proved in Section \ref{sec3:properties}. The idea is to consider the height process of the exclusion process (see e.g. \cite{PrahoferSpohn00}).  The conception of record plays an important role. The definition of record for a sequence of random variables can be found in Example 2.3.2. \cite{Durrett}. In Section \ref{sec4:properties}, we  first consider the facilitated exclusion process viewed from a tagged hole. This process can be mapped into the zero range process, which was used early in \cite{Kipnis86} to prove the central limit theorem for the tagged particle in the simple exclusion process. We then can easily prove Theorem \ref{thm:properties3} by the symmetry of the process. In Section \ref{sec5:properties}, we show how to derive the limiting distribution in the subcritical case.

\section{Proof of Theorem \ref{thm:properties4}}\label{sec2:properties}

In this Section, we prove Theorem \ref{thm:properties4}. As stated before, it was observed in \cite{BaikBarraquandCorwinToufic17} that there exists a family of (spatial) ergodic non-degenerate invariant measures for the process with density $1/2 < \rho < 1$. For completeness, we show how to construct such measures.

\begin{proof}[Proof of Theorem \ref{thm:properties4}]
(1) The basic idea is that we can view the facilitated exclusion particles between successive holes as the zero range particles at a site. Since the zero range process possesses a family of non-trivial invariant measures, so does the facilitated exclusion process.

Let $\mathbf{X_0}$ be the family of configurations without zero pairs,
\begin{equation}
\mathbf{X_0} = \{\eta \in \mathbf{X}: (\eta (x), \eta (x+1)) \neq (0,0) ~~\text{for all $x$} \}.
\end{equation}
Note that $\mathbf{X_0}$ is a closed set for the process $\eta_t$. For each $\rho \in (1/2,1)$, we will define a  renewal measure $\mu_\rho$ on the configuration space $\mathbf{X_0}$. To be specific, for $A = \{x_1, x_2, \ldots, x_n\}$ with $x_i + 1 < x_{i+1}$ for all $1 \leq i \leq n-1$, define $\mu_\rho$ by
\begin{equation}\label{eqn3:properties}
\begin{split}
\mu_\rho (\eta(x_i) &= 0 ~\text{for $1 \leq i \leq n$ and}~ \eta (x) = 1 ~\text{for all $x \notin A$ such that $x_1 < x < x_n$})\\
&= (1 - \rho) \prod_{i = 1}^{n - 1} \left( \varphi (\rho)^{x_{i+1} - x_i - 2} (1 - \varphi (\rho))\right),
\end{split}
\end{equation}
i.e., the number of particles between successive holes has geometric distribution with parameter $1 - \varphi (\rho)$. Since the average number of holes is $1 - \rho$, $\varphi$ and $\rho$ are related in the following way:
\begin{equation}
\frac{1}{1 - \varphi} + 1 = \frac{1}{1 - \rho}.
\end{equation}
Theofore, $\varphi (\rho) = \frac{2 \rho - 1}{\rho}$. The fact that $\mu_\rho$ is an invariant measure of the FTASEP was observed in \cite{BaikBarraquandCorwinToufic17}.

Next we show the ergodicity of $\mu_\rho$. For each measurable set $A$ such that $\mu_\rho \left( (\tau_x^{-1} A) \Delta A \right) = 0 $ for all $x \in \mathbb{Z}$, we need to show that $\mu_\rho (A) = 0$ or $1$. For each $\epsilon > 0$, there exists a set $B$ depending on only finitely many coordinates such that
\begin{equation}
\mu_\rho (A \Delta B) \leq \epsilon.
\end{equation}
Then for any $x$,
\begin{equation}\label{eqnA:properties}
  \begin{split}
  | \mu_\rho ( A) - \mu_\rho \left(B \cap (\tau_x^{-1} B) \right) | &= | \mu_\rho \left( A \cap (\tau_x^{-1} A) \right) - \mu_\rho \left(B \cap (\tau_x^{-1} B) \right) | \\
  &\leq  \mu_\rho \left( (A \cap (\tau_x^{-1} A)) \Delta (B \cap (\tau_x^{-1} B)) \right)\\
  &\leq \mu_\rho (A \Delta B) + \mu_\rho \left( (\tau_x^{-1} A) \Delta (\tau_x^{-1} B) \right) \leq 2 \epsilon,
  \end{split}
\end{equation}
and
\begin{equation}\label{eqnB:properties}
\begin{split}
|\mu_\rho (B) \mu_\rho (\tau_x^{-1} B) &- \mu_\rho (A) \mu_\rho (\tau_x^{-1} A)| \\
 &\leq |\mu_\rho (B) - \mu_\rho (A)| \mu_\rho (\tau_x^{-1} B)  + |\mu_\rho (\tau_x^{-1} B) - \mu_\rho (\tau_x^{-1} A)| \mu_\rho (A) \leq 2 \epsilon.
\end{split}
\end{equation}
To bound the term $\mu_\rho \left(B \cap (\tau_x^{-1} B) \right) - \mu_\rho (B) \mu_\rho (\tau_x^{-1} B)$, we insert the following lemma.

\begin{lem}
For any cylinder functions $f$ and $g$, as $x \rightarrow \infty$,
\begin{equation}
\int f (\eta) g (\tau_x \eta) \mu_\rho (d \eta) \rightarrow \int f d \mu_\rho \int g d \mu_\rho.
\end{equation}
\end{lem}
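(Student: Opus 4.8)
The plan is to exploit the renewal structure of $\mu_\rho$: the holes form a stationary renewal process whose inter-hole gaps $T = x_{i+1} - x_i$ are i.i.d.\ with $\mu_\rho(T = m) = \varphi^{m-2}(1-\varphi)$ for $m \geq 2$, a geometric law with finite mean and span $1$. Since any cylinder function is a finite linear combination of indicators of cylinder events, and both sides of the claimed identity are bilinear in $(f,g)$, it suffices to treat bounded $f,g$ depending only on the coordinates in a common window $[-L,L]$; then $g(\tau_x\eta)$ depends only on the coordinates in $[x-L,x+L]$, which is disjoint from $[-L,L]$ once $x > 2L$. The conceptual content is that a finite window near the origin and a finite window near $x$ become asymptotically independent as $x \to \infty$, which is precisely the loss of memory guaranteed by the renewal theorem.

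First I would record the \emph{regeneration property} of the renewal measure: conditioned on $\{\eta(y) = 0\}$ (a renewal at $y$), the restrictions of $\eta$ to $(-\infty, y]$ and to $[y,\infty)$ are independent, and the restriction to $[y,\infty)$ is distributed as a renewal process started afresh at $y$. Let $\xi := \inf\{z > L : \eta(z) = 0\}$ be the first hole to the right of the window of $f$; since the gaps are a.s.\ finite, $\xi < \infty$ $\mu_\rho$-a.s. For each $y > L$, the product $f(\eta)\mathbf{1}\{\xi = y\}$ is measurable with respect to the coordinates $\leq y$, while for $x > y + L$ the function $g(\tau_x\eta)$ is measurable with respect to $[y,\infty)$. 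Conditioning on the coordinates $\leq y$ and using regeneration (the conditional law of the right part depends only on the event $\eta(y) = 0$) gives
\[
\int f(\eta)\,g(\tau_x\eta)\,\mu_\rho(d\eta) = \sum_{y > L} w_y\,\psi(x - y),
\qquad w_y := \int f(\eta)\,\mathbf{1}\{\xi = y\}\,\mu_\rho(d\eta),
\]
where $\psi(k) := \int g(\tau_k\eta)\,\mu_\rho(d\eta \mid \eta(0) = 0)$ is the expectation of $g$ read off at distance $k$ to the right of a renewal.

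Next I would invoke the renewal theorem. Because the gap law has finite mean and span $1$, Blackwell's theorem shows that the backward recurrence time at site $k$ (the distance from $k$ to the nearest hole at or below $k$) converges in distribution, as $k \to \infty$, to its equilibrium law; combined with the regeneration property, this means that the law of $\eta$ restricted to a fixed window around $k$, conditioned on a hole at $0$, converges to its stationary law under $\mu_\rho$. Consequently $\psi(k) \to \int g\,d\mu_\rho =: \bar g$ as $k \to \infty$. Finally, since $\xi < \infty$ a.s.\ we have $\sum_{y>L} w_y = \int f\,d\mu_\rho$ and $\sum_{y>L} |w_y| \leq \int |f|\,d\mu_\rho < \infty$, so from $|\psi(x-y) - \bar g| \leq 2\|g\|_\infty$ together with $\psi(x-y) \to \bar g$ for each fixed $y$, dominated convergence for series yields
\[
\int f(\eta)\,g(\tau_x\eta)\,\mu_\rho(d\eta) - \bar g\int f\,d\mu_\rho = \sum_{y > L} w_y\big(\psi(x-y) - \bar g\big) \longrightarrow 0,
\]
which is the assertion. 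The main obstacle is the renewal-theoretic step $\psi(k) \to \bar g$: one must justify that the version of the process conditioned on a renewal at the origin relaxes to the stationary measure $\mu_\rho$ at large distances, which is exactly where finiteness of the mean gap and aperiodicity (span $1$) of the geometric gap law are used.
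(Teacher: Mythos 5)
Your proof is correct, but it takes a genuinely different route from the paper's. The paper exploits the special (memoryless) form of the geometric gap law: it observes that under $\mu_\rho$ the coordinates $\{\eta(x)\}_{x\ge x_1}$ form a stationary two-state Markov chain with $p(0,1)=1$, $p(1,1)=(2\rho-1)/\rho$, $p(1,0)=(1-\rho)/\rho$ and invariant law $\pi(1)=\rho$, and then gets the asymptotic factorization directly from the Markov property plus convergence of this finite, aperiodic, irreducible chain to stationarity. That argument is shorter and entirely elementary, but it hinges on $\mu_\rho$ being Markovian, which is a consequence of the gaps being geometric. You instead treat $\mu_\rho$ as a general stationary renewal process: you regenerate at the first hole $\xi$ to the right of the support of $f$, decompose the covariance as $\sum_y w_y\,\psi(x-y)$, and invoke Blackwell's renewal theorem (finite mean gap, span $1$) to get $\psi(k)\to\int g\,d\mu_\rho$, finishing with dominated convergence over the absolutely summable weights $w_y$. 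This is heavier machinery but strictly more general --- it would prove mixing for any renewal measure with an aperiodic, finite-mean gap law, not just the geometric one. One small point of bookkeeping: your displayed identity $\int f(\eta)g(\tau_x\eta)\,\mu_\rho(d\eta)=\sum_{y>L}w_y\,\psi(x-y)$ only holds term-by-term for $y\le x-L$ (otherwise the window of $g(\tau_x\cdot)$ is not contained in $[y,\infty)$ and the regeneration step does not apply); since $\sum_{y>x-L}|w_y|\to 0$ as $x\to\infty$ and $\psi$ is bounded by $\|g\|_\infty$, the offending tail is negligible and your limit argument goes through unchanged, but the identity should be stated with this restriction or with an explicit error term.
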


\begin{proof}
Since $f$ and $g$ depend on only finitely many coordinates, there exist $x_1 < x_2 < \ldots < x_n$ such that $f (\eta) = f (\eta (x_1), \ldots, \eta(x_n))$ and similarly $g (\eta) = g (\eta (x_1), \ldots, \eta(x_n))$. Under $\mu_\rho$, $\{\eta (x), x \geq x_1\}$ is a $\{0,1\}$-valued Markov chain with transition probability $p (0,1) = 1, p (1,1) = (2 \rho - 1) / \rho$ and $p (1,0) = (1 - \rho) / \rho$, and with invariant distribution $\pi$ such that $\pi (1) = \rho$ and $\pi (0) = 1 - \rho$. Denote by $\mathcal{E}$ the law of the Markov chain with initial distribution $\pi$. Then for large enough $x$ such that $x + x_1 > x_n$, by the Markov property,
\begin{equation}
\begin{split}
\mathcal{E} &\left[ f (\eta (x_1), \ldots, \eta(x_n)) g (\eta (x_1 + x), \ldots, \eta(x_n + x)) \right] \\
&= \mathcal{E} \left\{ f (\eta (x_1), \ldots, \eta(x_n))  \mathcal{E} [ g (\eta (0), \ldots, \eta(x_n - x_1)) | \eta (x_1 + x) ] \right\}.
\end{split}
\end{equation}
The right-hand side of the above identity converges to $\mathcal{E} \left[ f (\eta (x_1), \ldots, \eta(x_n)) \right] \mathcal{E} \left[ g (\eta (x_1), \ldots, \eta(x_n)) \right]$, and the lemma follows.
\end{proof}

We return to the proof of the ergodicity of $\mu_\rho$. By the above lemma, we can find $x$ large enough such that
\begin{equation}\label{eqnC:properties}
|\mu_\rho \left(B \cap (\tau_x^{-1} B) \right) - \mu_\rho (B) \mu_\rho (\tau_x^{-1} B)| \leq \epsilon.
\end{equation}
By (\ref{eqnA:properties}), (\ref{eqnB:properties}) and (\ref{eqnC:properties}), $| \mu_\rho ( A) -  \mu_\rho ( A)^2| \leq 5 \epsilon$. Since $\epsilon$ is arbitrary, $\mu_\rho ( A) \in \{0,1\}$.

(2) Let $\mu$ be a translation invariant and (spatial) ergodic measure invariant for the process with density $0 < \rho \leq 1/2$. Our goal is to show $\mu$ is degenerate, i.e.,
\begin{equation}\label{eqn7:properties}
\mu \{\eta: \eta (x) = \eta (x+1) = 1 \} = 0.
\end{equation}
For simplicity, by $\mu (11)$ denote $\mu \{\eta: \eta (x) = \eta (x+1) = 1\}$. Then
\begin{equation}
\mu (00) = \mu (0) - \mu (10) \geq \mu (1) - \mu (10) = \mu (11).
\end{equation}
If $\mu (00) = 0$, then $\mu (11) = 0$, and (\ref{eqn7:properties}) holds. Now assume $\mu (00) > 0$.

By generator calculation we can get some properties of $\mu$. Define $f (\eta) = 1 \{\eta (x) = \eta (x+1) = 0\}$. Since $\mu$ is invariant for the process,
\begin{equation}\label{eqn5:properties}
 \int \mathcal{L} f (\eta) \mu (d \eta) = 0.
\end{equation}
By direct calculation,
\begin{equation}
\mathcal{L} f (\eta) = - 1 \{\eta (x - 2) = \eta (x - 1) = 1, \eta (x) = \eta (x+1) = 0\}.
\end{equation}
Then by (\ref{eqn5:properties}),
\begin{equation}
  \mu (1100) = 0.
\end{equation}
Again, let $f (\eta) = 1 \{\eta (x - 2) = \eta (x - 1) = 1, \eta (x) = \eta (x+1) = 0\}$, then $\mu (110100) = 0$ by  (\ref{eqn5:properties}). By induction,
\begin{equation}
\mu (11(01)^k00) = 0
\end{equation}
for all $k \geq 0$.

For $n > 0$, let $A_n = \{\eta: \eta (n) = \eta (n - 1) = 0\}$ and $A = \{A_n~~\text{i.o.}\}$. Then
\begin{equation}
\mu (A) \geq \limsup_{n \rightarrow \infty} \mu (A_n) = \mu (00) > 0.
\end{equation}
Since the event $A$ is an invariant event for the measure $\mu$, by the ergodicity of $\mu$, $\mu (A) = 1$. Define
\begin{equation}
R (\eta) = \inf \{m > 0: \eta (m-1) = \eta (m) = 0 \}.
\end{equation}
Then
\begin{equation}\label{eqn8:properties}
\mu \{ \eta: R (\eta) < \infty \} = 1,
\end{equation}
since $A \subset \{ \eta: R (\eta) < \infty \}$.

Now we are ready to prove (\ref{eqn7:properties}). Note that
\begin{equation}\label{eqn6:properties}
 \mu \{\eta: \eta (0) = \eta (1) = 1\} = \sum_{m = 3}^\infty \mu \{\eta: \eta (0) = \eta (1) = 1, R (\eta) =  m\}.
\end{equation}
Since there must exists a pattern $``11(01)^k00"$ in the event $\{\eta: \eta (0) = \eta (1) = 1, R (\eta) =  m\}$ for some $k$, the right-hand side of (\ref{eqn6:properties}) equals zero and (\ref{eqn7:properties}) holds.
\end{proof}

\section{Proof of Theorem \ref{thm:properties1}}\label{sec3:properties}

The exclusion process can be expressed in terms of the height process (see e.g. \cite{PrahoferSpohn00}). Let $h (t,x)$ be the height process associated to the facilitated exclusion process $\eta_t$:
\begin{equation}
h (t,x) =
\begin{cases}
2 N_t + \sum_{y = 1}^x [1 - 2 \eta_t (y)]  &\text{if $x > 0$},\\
2 N_t   &\text{if $x = 0$},\\
2 N_t - \sum_{y = x+1}^0 [1 - 2 \eta_t (y)]   &\text{if $x < 0$},
\end{cases}
\end{equation}
where $N_t$ is the number of particles across the bond $(0,1)$ during time interval $[0,t)$. At any time $t$, the height process $h (t,x)$ satisfies
\begin{equation}
|h (t,x) - h (t,x-1)| = 1
\end{equation}
for any $x$. The dynamics of the height process is as followings: The height at the site $x$ increases by two at rate one if $h (t,x-2) - h (t,x) = 2$ and $h (t,x+1) - h (t,x) = 1$. The infinitesimal generator of the height process $h(t,x)$ is given by
\begin{equation}
\mathfrak{L} f (h) = \sum_{x \in \mathbb{Z}} 1 \{h (x - 2) = h (x) + 2, h (x + 1) = h (x) + 1\} [f (h^x) - f(h) ],
\end{equation}
where  $h^x (x) = h (x) + 2$ and $h^x (y) = h (y)$ for $y \neq x$.

We say that site $x$ is a record for the height process $h (t,x)$ at time $t$ if
\begin{equation}
h (t,x) \geq h (t,y)~~\text{for all $y < x$}.
\end{equation}
The definition of record for a sequence of random variables appeared early in Example 2.3.2. \cite{Durrett}. We first prove some properties of the records.

\begin{lem}\label{lem:properties1}
(1) If site $x$ is a record at time $t$, then $\eta_t (x) = 0$.
(2) If site $x$ is a record at time $t$, then site $x$ is a record at any time $s \geq t$. In other words, no particles can pass a record.
\end{lem}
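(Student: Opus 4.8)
The plan is to recast both statements in the height picture and then track how a single admissible jump of the generator $\mathfrak{L}$ affects the relevant heights, exploiting the facilitation rule. First I would record the elementary dictionary between occupation and height increments: from the definition of $h(t,\cdot)$ one checks directly that $h(t,x)-h(t,x-1)=1-2\eta_t(x)$ for every $x$, so an up-step (increment $+1$) sits exactly over a hole and a down-step ($-1$) over a particle. I would also isolate two structural features of $\mathfrak{L}$ that drive everything: heights are non-decreasing in time, since the only transition is $h(x)\mapsto h(x)+2$; and a transition at a site $z$ is admissible only when $h(z-2)=h(z)+2$, so that immediately after it fires the new value is $h(z)+2=$ the old $h(z-2)$, while every other height is left unchanged.

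Part (1) is then immediate. If $x$ is a record at time $t$, then in particular $h(t,x)\ge h(t,x-1)$; since $h(t,x)-h(t,x-1)=\pm 1$, this forces the value $+1$, i.e. an up-step over $x$, whence $\eta_t(x)=0$ by the dictionary.

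For part (2) I would show that the set of height configurations for which $x$ is a record is closed under the dynamics, in the same spirit as the closedness of $\mathbf{X}_0$ used earlier. The record property can only be disturbed by a jump, and because heights never decrease, $h(x)$ itself is never lowered; the only possible threat is a jump at some $z<x$ pushing $h(z)$ above $h(x)$ (a jump at $z=x$ only raises $h(x)$, and jumps at $z>x$ are irrelevant to the sites $y\le x$). But an admissible jump at $z$ replaces $h(z)$ by the old $h(z-2)$, and since $z<x$ gives $z-2<x$, the record hypothesis yields $h(z-2)\le h(x)$; as $h(x)$ is unchanged by a jump at $z\ne x$, the configuration is still a record at $x$ afterwards. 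Running this over the (pathwise countably many) jumps shows the record persists for all $s\ge t$.

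The conceptual heart, and the only place requiring care, is the bookkeeping in the previous paragraph: the point is that the facilitation condition $h(z-2)=h(z)+2$ is precisely what guarantees that the post-jump height at $z$ coincides with a pre-jump height strictly to the left of $x$, which the record already dominates. Everything else is routine. The remaining mild obstacle is the standard upgrade from ``no single transition can break the property'' to ``the property holds for all later times,'' which I would phrase through the closed-set language already employed for $\mathbf{X}_0$, invoking the graphical construction of the process.
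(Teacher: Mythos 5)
Your proposal is correct and follows essentially the same route as the paper: part (1) is the identical one-line computation from $h(t,x)-h(t,x-1)=1-2\eta_t(x)$, and part (2) rests on the same key observation that an admissible jump at $z$ raises $h(z)$ only to the pre-jump value of $h(z-2)$, which the record at $x$ already dominates since $z-2<x$. The paper phrases this as a contradiction at the first violation time $\tau$ rather than as closedness under each jump, but the two formulations are interchangeable.
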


\begin{proof}
(1) By the definition of the height process,
\begin{equation}
h (t,x) - h(t,x-1) = 1 - 2 \eta_t (x).
\end{equation}
If $x$ is a record at time $t$, then $h (t,x) \geq h(t,x-1)$. Therefore, $\eta_t (x) = 0$.

(2) Assume that  $x$ is a record at time $t$. We need to show that for all $s \geq t$, for all $y < x$, $h (s,x) \geq h (s,y)$. If this is not the case, let $\tau$ be the first time such that $h (\tau,y) > h (\tau,x)$ for some $y < x$. Then at time $\tau-$, by the dynamics of the height process, $h (\tau-,y-2) = h (\tau-,y) + 2 > h (\tau-,x)$, which is a contradiction.
\end{proof}

\begin{proof}[Proof of Theorem \ref{thm:properties1}]

Suppose that the initial state $\eta_0$ of the exclusion process has distribution $\nu_\rho$. Then $\{ h (0,x), x \in \mathbb{Z} \}$ is a two sided simple random walk with upward probability $1 - \rho$.

(a) If $0 < \rho < 1/2$, then
\begin{equation}
\limsup_{x \rightarrow - \infty} h (0,x) < \infty
\end{equation}
with probability one. Therefore, at time zero, there exists at least one record. Let $x$ be the position of such a record. By Lemma \ref{lem:properties1}, $\eta_t (x) = 0$ for all $t \geq 0$. Since there are finite sites between $x$ and $0$, site $0$ freezes finally.

(b) If $1/2 \leq \rho < 1$, then with probability one,
\begin{equation}\label{eqn9:properties}
\limsup_{x \rightarrow - \infty} h (0,x) = \infty
\end{equation}
and $\eta_0 (x) = 0$ for infinitely many $x > 0$. It suffices to show that
\begin{equation}
\lim_{t \rightarrow \infty} h (t,0) = \infty
\end{equation}
with probability one. Fix integer $M > \max \{h (0,0), h (0,1)\}$. Let $\mathcal{X}_M = \max \{ x < 0: h (0,x) = M \}$. Then $\mathcal{X}_M > - \infty$ by (\ref{eqn9:properties}). Let $\mathcal{T} (x,M) = \inf \{t: h (t,x) \geq M \}$. By the dynamics of the height process, with probability one, $\mathcal{T} (\mathcal{X}_M, M) < \mathcal{T} (\mathcal{X}_M +2, M) < \cdots < \mathcal{T} (1,M) < \infty$ if $\mathcal{X}_M$ is odd, and $\mathcal{T} (\mathcal{X}_M, M) < \mathcal{T} (\mathcal{X}_M +2, M) < \cdots < \mathcal{T} (0,M) < \infty$ if $\mathcal{X}_M$ is even. Therefore,
\begin{equation}
\lim_{t \rightarrow \infty} \max\{h (t,0), h (t,1)\} \geq M.
\end{equation}
Since $|h (t,0) - h (t,1)| = 1$, let $M$ tends to infinity, and the desired result follows.
\end{proof}

\section{Proof of Theorem \ref{thm:properties3}}\label{sec4:properties}

It is well known that the asymmetric simple exclusion process can be mapped into the zero range process (see e.g. \cite{Kipnis86}). Recall that we say there is a hole at site $x$ if $\eta (x) = 0$. We first consider the FTASEP $\eta_t$ with initial distribution  $\bar{\nu}_{1/2} (\cdot) = \nu_{1/2} (\cdot | \eta (0) = 0)$.  At time zero, we label the positions of the holes from the left to the right in an increasing order
$$
\ldots < H_{-1} (0) < H_0 (0) < H_1 (0) < \ldots,
$$
and set $H_0 (0) = 0$. Let $H_i (t)$ is the position of the $i$-th hole at time $t$. Since the ordering of the holes is preserved,
$$
\ldots < H_{-1} (t) < H_0 (t) < H_1 (t) < \ldots,
$$
for all $t$. Let $\xi_t (i) = H_i (t) - H_{i-1} (t) - 1$ be the number of exclusion particles between the $(i-1)$-th hole and the $i$-th hole. Then $\xi_t$ is a zero range process with generator
\begin{equation}
\mathbb{L} f (\xi) = \sum_{x \in \mathbb{Z}} 1 \{\xi (x) > 1 \} [f (\xi^{x,x+1}) - f (\xi)],
\end{equation}
where
\begin{equation}
\xi^{x,x+1} (y) =
\begin{cases}
\xi (x) - 1 &\text{if $y = x$},\\
\xi (x) + 1 &\text{if $y = x+1$},\\
\xi (y)     &\text{otherwise}.\\
\end{cases}
\end{equation}
Note that a $\xi$ particle at site $x$ can jump to its nearest neighbor site if and only if there are at least two particles at site $x$. Without confusion, we will use the letter $\eta_t$ to denote the exclusion process and the letter $\xi_t$ to denote the corresponding zero range process.

Suppose that the FTASEP $\eta_t$ has initial distribution $\bar{\nu}_{1/2}$. Then $\{ \xi_0 (x), x \in \mathbb{Z} \}$ are i.i.d. random variables with Geometric distribution:
\begin{equation}
\mathbf{P}_{\bar{\nu}_{1/2}} (\xi_0 (0) = n) = \frac{1}{2^{n+1}}, n \geq 0.
\end{equation}
Note that under $\bar{\nu}_{1/2}$, the mean of $\xi_0 (0)$ is one. Since $\xi_t (0) \geq 1$ implies $\xi_s (0) \geq 1$ for all $s \geq t$, we expect that the probability converges to one that the origin is exactly occupied  by  one zero range particle.

\begin{lem}\label{lem:properties2}
Suppose that the FTASEP $\eta_t$ has initial distribution $\bar{\nu}_{1/2}$. Then  for every $x \in \mathbb{Z}$,
\begin{equation}\
\lim_{t \rightarrow \infty} \mathbf{P}_{\bar{\nu}_{1/2}} (\xi_t (x) = 1) = 1.
\end{equation}
\end{lem}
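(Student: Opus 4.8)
The plan is to reduce the statement to showing that, almost surely, every site is eventually occupied by at least one zero range particle, and then to establish this by an ergodic mass-counting argument. Fix $x=0$. First I would record two elementary facts. By the monotonicity already noted in the text, the events $\{\xi_t(0)\ge 1\}$ increase in $t$, so the events $\{\xi_t(0)=0\}$ decrease to $E=\{\xi_t(0)=0 \text{ for all } t\ge0\}$ and $\mathbf{P}_{\bar\nu_{1/2}}(\xi_t(0)=0)\searrow \mathbf{P}_{\bar\nu_{1/2}}(E)$. Second, since the law of $\xi_0$ is translation invariant (the $\xi_0(y)$ being i.i.d.) and the dynamics conserves particles, the mean is preserved: applying $\mathbb{L}$ to the coordinate map $\xi\mapsto\xi(0)$ gives $\mathbb{L}(\xi(0))=\mathbf 1\{\xi(-1)\ge2\}-\mathbf 1\{\xi(0)\ge2\}$, whose expectation vanishes by translation invariance, so $\mathbf{E}_{\bar\nu_{1/2}}[\xi_t(0)]=\mathbf{E}_{\bar\nu_{1/2}}[\xi_0(0)]=1$ for all $t$.

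Next I would reduce to a single limit. From $\mathbf{E}[\xi_t(0)]=1$ together with $\sum_{n\ge0}\mathbf{P}(\xi_t(0)=n)=1$ one obtains the identity $\mathbf{P}(\xi_t(0)=0)=\mathbf{E}[(\xi_t(0)-1)^+]=\sum_{n\ge2}(n-1)\mathbf{P}(\xi_t(0)=n)$, which in particular dominates $\mathbf{P}(\xi_t(0)\ge2)$. Hence if $\mathbf{P}(\xi_t(0)=0)\to0$ then also $\mathbf{P}(\xi_t(0)\ge2)\to0$, and therefore $\mathbf{P}(\xi_t(0)=1)=1-\mathbf{P}(\xi_t(0)=0)-\mathbf{P}(\xi_t(0)\ge2)\to1$. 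Combined with the previous paragraph, the whole lemma comes down to proving $\mathbf{P}_{\bar\nu_{1/2}}(E)=0$, i.e. that no site stays empty forever.

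To prove $\mathbf{P}(E)=0$ I would argue by contradiction using spatial ergodicity. Building the process from i.i.d. initial data and independent Poisson bond clocks, the event $E$ and its translates $E_y=\{\xi_t(y)=0\ \text{for all}\ t\}$ form a translation covariant family on an ergodic probability space, so if $c:=\mathbf{P}(E)>0$ then by the ergodic theorem the set $S=\{y:E_y \text{ holds}\}$ of \emph{permanently empty} sites has density $c$ almost surely; in particular $S$ is a.s.\ bi-infinite. The key structural observation is that a permanently empty site blocks all current on both adjacent bonds: it never emits (it carries no particle) and it never receives (otherwise it would become occupied). Consequently $S$ partitions $\mathbb{Z}$ into finite segments that never exchange mass, so each segment $\{a,\dots,b\}$ lying strictly between two consecutive elements of $S$ keeps its initial total mass $M=\sum_{j=a}^b\xi_0(j)$ for all time.

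The heart of the matter is to show that such a closed segment must satisfy $M\le b-a+1$, i.e.\ its density is at most one. For this I would begin at the rightmost site $b$: on $E$ it may never emit into the blocked site to its right, so $\xi_t(b)$ is nondecreasing; were it ever to reach $2$ it would stay $\ge2$ and its right clock would eventually ring, filling the forbidden site, so in fact $\xi_t(b)\le1$ for all $t$ and site $b$ receives at most one particle in total. Feeding this back, any site emitting infinitely often would force its right neighbour to receive infinitely often, and this infinite current would propagate rightward to site $b$, contradicting the bound just obtained; hence every site of the segment emits only finitely often, the segment freezes in finite time in a configuration with all occupations $\le1$, and therefore $M\le b-a+1$. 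Finally I would pick two far-apart permanently empty sites $p_-<0<p_+$ and compare masses on $[p_-,p_+]$: the conserved mass equals $\sum_{j=p_-}^{p_+}\xi_0(j)\sim(p_+-p_-)$ by the law of large numbers at density one, whereas summing the per-segment bound gives at most $(p_+-p_-)-|S\cap[p_-,p_+]|\sim(1-c)(p_+-p_-)$; letting the box grow yields $1\le1-c$, forcing $c=0$. The main obstacle is exactly this segment analysis, namely turning ``the boundary site can never overflow'' into the clean bound $M\le b-a+1$ while ruling out any transient overshoot at the right end; the remaining steps are bookkeeping with conservation and ergodicity.
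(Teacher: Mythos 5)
Your proposal is correct, and its outer skeleton coincides with the paper's: the monotonicity of $\{\xi_t(0)\ge 1\}$ in $t$, the conservation of the mean $\mathbf{E}_{\bar\nu_{1/2}}[\xi_t(0)]\equiv 1$ via the generator computation, and the reduction of $\mathbf{P}(\xi_t(0)=1)\to 1$ to $\mathbf{P}(\xi_t(0)=0)\to 0$ (your identity $\mathbf{P}(\xi_t(0)=0)=\mathbf{E}[(\xi_t(0)-1)^+]\ge \mathbf{P}(\xi_t(0)\ge 2)$ is equivalent to the paper's inequality $\mathbf{P}(\xi_t(0)=1)\ge 1-2\,\mathbf{P}(\xi_t(0)=0)$). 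Where you genuinely diverge is the crucial step $\mathbf{P}(\xi_t(0)=0\ \text{for all}\ t)=0$. The paper's argument is local and short: a.s.\ there is some $N$ with $\sum_{x=-N}^{-1}\xi_0(x)\ge N+1$, because $S_n=\sum_{x=-n}^{-1}(\xi_0(x)-1)$ is a recurrent mean-zero random walk (Chung--Fuchs), and the resulting surplus is eventually pushed into the origin. You instead argue globally: if permanent emptiness had probability $c>0$, the permanently empty sites would form (by ergodicity of the i.i.d.\ data and Poisson clocks) a set of density $c$ that blocks all current, cutting $\mathbb{Z}$ into finite mass-conserving segments; your segment analysis (the right boundary site can never exceed one particle, hence no site emits infinitely often, hence the segment freezes with at most one particle per site) caps each segment's mass by its length, and mass counting against the initial density one forces $c=0$. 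Both arguments are sound. The paper's is shorter and leans on the i.i.d.\ structure through random-walk recurrence; yours is longer but more robust (it needs only ergodicity and mean at least one) and it makes fully explicit the ``surplus is eventually pushed rightward'' step that the paper asserts in a single sentence.
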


\begin{proof}
By the translation invariance of the process and the initial distribution, we just need to consider $x = 0$. We first show that \begin{equation}
\lim_{t \rightarrow \infty} \mathbf{P}_{\bar{\nu}_{1/2}} (\xi_t (0) = 0) = 0.
\end{equation}
Since $\{ \xi_s (0) \geq 1 \} \subset \{ \xi_t (0) \geq 1 \}$ for  $t \geq s$,
\begin{equation}
\lim_{t \rightarrow \infty} \mathbf{P}_{\bar{\nu}_{1/2}} (\xi_t (0) = 0) = \mathbf{P}_{\bar{\nu}_{1/2}} (\text{$\xi_t (0) = 0$ for all $t$}).
\end{equation}
If initially there exists some $N > 0$ such that $\sum_{x = -N}^0 \xi_0 (x) \geq N + 1$, then $\xi_t (0) \geq 1$ for large $t$. Therefore
\begin{equation}
\mathbf{P}_{\bar{\nu}_{1/2}} (\text{$\xi_t (0) = 0$ for all $t$}) \leq \mathbf{P}_{\bar{\nu}_{1/2}} \left( \sum_{x = -N}^{-1} \xi_0 (x) \leq N,~ \forall N \geq 1 \right).
\end{equation}
Since $S_n := \sum_{x = -n }^{-1} (\xi_0 (x) - 1) $ with $S_0 = 0$ is a random walk on $\mathbb{Z}$ with mean zero, by the recurrence of the random walk (see e.g. Theorem 4.2.7 in \cite{Durrett}), the right-hand side of the last inequality equals zero.

We next prove that
\begin{equation}
\lim_{t \rightarrow \infty} \mathbf{P}_{\bar{\nu}_{1/2}} (\xi_t (0) = 1) = 1.
\end{equation}
First note that
\begin{equation}
\frac{d}{d t} \mathbf{E}_{\bar{\nu}_{1/2}} [\xi_t (0)] = \mathbf{P}_{\bar{\nu}_{1/2}} (\xi_t (-1) > 1) - \mathbf{P}_{\bar{\nu}_{1/2}} (\xi_t (0) > 1) = 0,
\end{equation}
then $\mathbf{E}_{\bar{\nu}_{1/2}} [\xi_t (0)] \equiv 1$. Since
\begin{equation}
\begin{split}
\mathbf{E}_{\bar{\nu}_{1/2}} [\xi_t (0)] &\geq \mathbf{P}_{\bar{\nu}_{1/2}} ( \xi_t (0) = 1) + 2 \mathbf{P}_{\bar{\nu}_{1/2}} ( \xi_t (0) \geq 2)\\
&= 2  - \mathbf{P}_{\bar{\nu}_{1/2}} ( \xi_t (0) = 1) - 2 \mathbf{P}_{\bar{\nu}_{1/2}} ( \xi_t (0) = 0),
\end{split}
\end{equation}
then
\begin{equation}
\mathbf{P}_{\bar{\nu}_{1/2}} ( \xi_t (0) = 1) \geq 1 - 2 \mathbf{P}_{\bar{\nu}_{1/2}} ( \xi_t (0) = 0).
\end{equation}
The desired result follows as $t$ tends to infinity.
\end{proof}

We now consider the limiting distribution of the process $\eta_t$ viewed from the tagged hole. Initially put a hole at the origin. Denote by $H_0 (t)$  the position of the hole at time $t$. Let $\bar{\eta}_t := \tau_{H_0 (t)} \eta_t$.

\begin{pro}\label{pro1:properties}
Suppose the initial distribution of $\eta_t$ is $\bar{\nu}_{1/2}$. As $t \rightarrow \infty$, the distribution of the process $\bar{\eta}_t$ at time $t$ converges weakly to the Dirac measure $\delta_{{\bm \eta^0}}$.
\end{pro}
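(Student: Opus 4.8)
The plan is to deduce the weak convergence from its characterisation on the compact product space $\mathbf{X}=\{0,1\}^{\mathbb{Z}}$: the laws of $\bar{\eta}_t$ converge to the Dirac mass $\delta_{{\bm \eta^0}}$ if and only if, for every fixed window size $L\ge 1$,
\begin{equation}
\lim_{t\to\infty}\mathbf{P}_{\bar{\nu}_{1/2}}\big(\bar{\eta}_t \text{ coincides with } {\bm \eta^0} \text{ on } [-L,L]\big)=1 .
\end{equation}
So it is enough to show that, on any fixed window centred at the tagged hole, the configuration seen from $H_0(t)$ agrees with the alternating configuration with probability tending to one. The key structural observation is that, because $\bar{\eta}_t=\tau_{H_0(t)}\eta_t$ is centred exactly at a hole, the pattern of $\bar{\eta}_t$ on a finite window is completely encoded by finitely many of the zero-range gap variables $\xi_t(i)$, namely those indices $i$ whose hole $H_i(t)$ lands in the window.

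First I would make this encoding quantitative. When the relevant gaps are all minimal, i.e. $\xi_t(i)=1$, consecutive holes sit at distance $2$, the intervening sites carry exactly one particle each, and the configuration read off from $H_0(t)$ is precisely the alternating one ${\bm \eta^0}$. Hence there is a deterministic, $t$-independent range of indices — taking $|i|\le L$ is more than enough — such that
\begin{equation}
\big\{\xi_t(i)=1\ \text{for all}\ |i|\le L\big\}\subseteq\big\{\bar{\eta}_t \text{ coincides with } {\bm \eta^0} \text{ on } [-L,L]\big\}.
\end{equation}
The delicate point to verify here is only that the number of gap coordinates involved stays bounded in $t$: on the event on the left each gap equals $1$, so at most $L$ holes lie within distance $L$ of the origin on either side, and controlling $\xi_t(i)$ for $|i|\le L$ already determines the window.

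I would then close the argument by a union bound together with Lemma \ref{lem:properties2}. Passing to complements in the inclusion above,
\begin{equation}
\mathbf{P}_{\bar{\nu}_{1/2}}\big(\bar{\eta}_t \text{ differs from } {\bm \eta^0} \text{ somewhere on } [-L,L]\big)
\le\sum_{|i|\le L}\mathbf{P}_{\bar{\nu}_{1/2}}\big(\xi_t(i)\ne 1\big).
\end{equation}
By Lemma \ref{lem:properties2} every summand tends to $0$ as $t\to\infty$, and the sum is finite, so the left-hand side tends to $0$. As this holds for each $L$, the finite-window agreement probabilities tend to one, and the weak convergence of $\bar{\eta}_t$ to $\delta_{{\bm \eta^0}}$ follows.

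The substantive input is entirely Lemma \ref{lem:properties2}, which upgrades the mean-one/monotone-occupation heuristic into the statement that each gap is asymptotically minimal; what remains is bookkeeping. The only step requiring genuine care is the encoding: one must confirm that a window of $\bar{\eta}_t$ is measurable with respect to a finite, $t$-independent collection of the $\xi_t(i)$. I expect this to be the main (if modest) obstacle, since $H_0(t)$ is random and the number of holes in a fixed spatial window fluctuates; one could worry that an unbounded number of gap variables is needed. This is avoided by phrasing the target through the explicit event $\{\xi_t(i)=1,\ |i|\le L\}$, which forces the alternating pattern while using only boundedly many coordinates regardless of $t$.
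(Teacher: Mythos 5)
Your proposal is correct and follows essentially the same route as the paper: reduce weak convergence to agreement on finite windows, observe that the window event for $\bar{\eta}_t$ (centred at the tagged hole) is exactly the event that finitely many zero-range coordinates $\xi_t(i)$ all equal $1$, and conclude via Lemma \ref{lem:properties2} with a union bound. The paper states this identification as an equality of events ($A_n(t)=\{\xi_t(i)=1,\ -n+1\le i\le n\}$) rather than an inclusion, but the content and the bookkeeping are the same.
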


\begin{proof}
Let $A_n (t)$ be the event such that $\bar{\eta}_t (x) = 1$ at odd sites $x$ between the interval $[-2 n, 2 n]$, while $\bar{\eta}_t (x) = 0$ at even sites $x$ between the interval $[-2 n,2 n]$. It suffices to show that
\begin{equation}
\lim_{t \rightarrow \infty} \mathbf{P}_{\bar{\nu}_{1/2}} \left[ A_n (t)  \right] = 1
\end{equation}
for any $n > 0$. The left-hand side of the last formula equals
$$
\lim_{t \rightarrow \infty} \mathbf{P}_{\bar{\nu}_{1/2}} (\text{$\xi_t (x) = 1$ for $-n+1 \leq x \leq n$}),
$$
which is equal to one by Lemma \ref{lem:properties2}.
\end{proof}

\begin{proof}[Proof of Theorem \ref{thm:properties3}]
We first show that there are neither hole pairs nor particle pairs in the system as time $t$ tends to infinity. Note that $\{ (\eta_s (0),\eta_s (1)) \neq (0,0) \}$   implies $\{ (\eta_t (0),\eta_t (1)) \neq (0,0) \}$ for $t \geq s$. Then
\begin{equation}
\mathbf{P}_{\nu_{1/2}} (\eta_t (0) = \eta_t (1) = 0) = \mathbf{P}_{\nu_{1/2}} \left(\eta_t (0) = \eta_t (1) = 0, \eta_0 (0) = 0 \right).
\end{equation}
The right-hand side of  the last formula is equal to
\begin{equation}
\frac{1}{2} \mathbf{P}_{\nu_{1/2}} (\eta_t (0) = \eta_t (1) = 0 | \eta_0 (0) = 0),
\end{equation}
which converges to zero, by Proposition \ref{pro1:properties}, as $t$ tends to infinity.

Let $p_t (11) := \mathbf{P}_{\nu_{1/2}} (\eta_t (x) = \eta_t (x+1) = 1)$, which doesn't depend on $x$ by translation invariance. Similarly define $p_t (10), p_t (01)$ and $p_t (00)$. Since $\mathbf{E}_{\nu_{1/2}} [\eta_t (x)] \equiv 1/2$,
\begin{equation}
   p_t (11) = \frac{1}{2} - p_t (10) = p_t (00) + p_t (10) - p_t (10) = p_t (00).
\end{equation}
Therefore, $p_t (11)$ also converges to zero as $t$ tends to infinity. Then we must have
\begin{equation}
\lim_{t \rightarrow \infty} \nu_{1/2} S(t) = \alpha \delta_{{\bm \eta^0}} + (1 - \alpha) \delta_{{\bm \eta^1}}.
\end{equation}
for some $\alpha \in [0,1]$. Since the density of the particles is $1/2$, $\alpha = 1/2$.
\end{proof}

\section{Discussions}\label{sec5:properties}

In Theorem \ref{thm:properties3}, we considered the limiting distribution of the process in the critical case. In this section, we show how to derive the limiting distribution in the subcritical case, while we are not able to calculate it explicitly.

Suppose the initial distribution of the process $\eta_t$ is $\nu_\rho$ with $0 < \rho < 1/2$. For simplicity, denote by $\nu^\rho_t = \nu_\rho S (t)$ the distribution of the FTASEP at time $t$ when the initial distribution of the process is $\nu_\rho$ and denote by $\nu^\rho_\infty$ the limit of $\nu^\rho_t$ along some subsequence. Since the family of probability measures on the space $\mathbf{X}$ is compact, and the following procedure shows that $\nu^\rho_\infty$ is uniquely determined, it follows that  $\nu^\rho_\infty$ is actually the limit of $\nu^\rho_t$ and is an invariant measure for the process.

Fix a sequence $\lambda_k \in \{0,1\}$ such that
\begin{equation}\label{eqn4:properties}
( \lambda_k, \lambda_{k+1} ) \neq (1,1)
\end{equation}
for all $k$. We now show how to calculate the limiting probability of the event $\{\eta : \eta (x + k) = \lambda_k~\text{for $0 \leq k \leq n - 1$} \}$. By translation invariance, the limiting probability doesn't depend on $x$. We assume (\ref{eqn4:properties}) because, by Theorem \ref{thm:properties1}, $ \nu^\rho_\infty (11) = 0$. We will show how to derive the probability by induction.\\
(1) $n = 1$. By translation invariance, $\nu^\rho_\infty \{\eta : \eta (0) = \lambda_0 \} = \rho^{\lambda_0} (1 - \rho)^{1 - \lambda_0}$.\\
(2) Suppose that we have known the limiting probability when $m < n - 1, n \geq 2$. There are three cases when $m = n - 1$:\\
(a) $\lambda_0 = \lambda_{1} = 0$. Since the event $\{\eta : \eta_t (0) = \eta_t (1) = 0, \eta_t ( k) = \lambda_k~\text{for $2 \leq k \leq n - 1$} \}$ is decreasing in time $t$,
\begin{equation}
\begin{split}
\lim_{t \rightarrow \infty}& \mathbf{P}_{\nu_\rho} ( \eta_t (0) = \eta_t (1) = 0, \eta_t ( k) = \lambda_k~\text{for $2 \leq k \leq n - 1$} ) \\
&= \mathbf{P}_{\nu_\rho} ( \eta_t (0) = \eta_t (1) = 0, \eta_t ( k) = \lambda_k~\text{for $2 \leq k \leq n - 1$ for all $t$} ).
\end{split}
\end{equation}
The right-hand side of the last equation is equal to
\begin{equation}\label{eqn1:properties}
\mathbf{P}_{\nu_\rho} \left( \eta_0 (0) = \eta_0 (1) = 0,  \eta_0 (k) = \lambda_k~\text{ for $2 \leq k \leq n - 1$ and $\sum_{x = -N}^{-1} \eta_0 (x) \leq \frac{N+1}{2}$ for all $N$} \right).
\end{equation}
Let $S_N = \sum_{x = -N}^{-1} (2 \eta_0 (x) - 1)$ with $S_0 = 0$. Under $\mathbf{P}_{\nu_\rho}$, $\{ \eta_0 (x), x \in \mathbb{Z} \}$ are i.i.d.  Bernoulli random variables with parameter $\rho$ and $\mathbf{P}_{\nu_\rho} (S_1 = 1) = \rho = 1 - \mathbf{P}_{\nu_\rho} (S_1 = -1)$. Then (\ref{eqn1:properties}) equals
\begin{equation}
(1 - \rho)^2  \mathbf{P}_{\nu_\rho} (S_N \leq 1~\text{for all $N$}) \prod_{k = 2}^{n - 1}  \rho^{\lambda_k} (1 - \rho)^{1 - \lambda_k} = (1 - 2 \rho) \prod_{k = 2}^{n - 1}  \rho^{\lambda_k} (1 - \rho)^{1 - \lambda_k}.
\end{equation}
(b) $\lambda_0 = 1, \lambda_{1} = 0$. Then
\begin{equation}\label{eqn2:properties}
\begin{split}
\nu^\rho_\infty \{\eta : \eta (k) &= \lambda_{k}~\text{for $0 \leq k \leq n - 1$} \} = \nu^\rho_\infty \{\eta : \eta ( k) = \lambda_{k}~\text{for $1 \leq k \leq n - 1$} \} \\
&- \nu^\rho_\infty \{\eta : \eta (0) = \eta (1) = 0, \eta ( k) = \lambda_{k}~\text{for $2 \leq k \leq n - 1$} \}.
\end{split}
\end{equation}
The first term of the right-hand side of (\ref{eqn2:properties}) involves $n - 1$ sites and, by the assumption, the probability is known. The second term of the right-hand side of (\ref{eqn2:properties}) reduces to case (a). \\
(c) $\lambda_0 = 0, \lambda_{1} = 1$. Since $ \nu^\rho_\infty (11) = 0$,
\begin{equation}
\nu^\rho_\infty \{\eta : \eta ( k) = \lambda_{k}~\text{for $0 \leq k \leq n - 1$} \} = \nu^\rho_\infty \{\eta : \eta ( k) = \lambda_{k}~\text{for $1 \leq k \leq n - 1$} \},
\end{equation}
whose probability is known by the assumption.

\begin{flushleft}
\textbf{Acknowledgements.} \small The authors would like to thank P. A. Ferrari for useful discussions during the Fifth Bath-Beijing-Paris meeting in Beijing during 14-18 May, 2018.
\end{flushleft}


\begin{thebibliography}{99}
\bibitem{BaikBarraquandCorwinToufic17} Baik, J., Barraquand, G., Corwin, I., and Toufic, S. (2017) Facilitated exclusion process. {\it ArXiv}:1707.01923.
\bibitem{BasuMohanty09} Basu, U. and  Mohanty, P. K. (2009) Active-absorbing-state phase transition beyond directed percolation: a class of exactly solvable models. {\it Physical Review E}, {\bf 79}: 041143.
\bibitem{Blondel etal18} Blondel, O., Erignoux, C., Sasada, M., and Simon, M. (2018) Hydrodynamic limit for an activated exclusion process. {\it ArXiv}:1805.09000.
\bibitem{Durrett} Durrett, R. (2010) {\it Probability: Theory and Examples}. Fourth edition. Cambridge University Press, Cambridge.
\bibitem{GabelKrapivskyRedner10} Gabel, A., Krapivsky, P. L., and Redner, S. (2010) Facilitated asymmetric exclusion. {\it Physical Review Letters}, {\bf 105}: 210603.
\bibitem{Johansson00} Johansson, K. (2000) Shape fluctuations and random matrices. {\it Comm. Math. Phys.} {\bf 209}: 437-476.
\bibitem{Kipnis86} Kipnis, C. (1986) Central limit theorems for infinite series of queues and applications to simple exclusion. {\it Ann Probab}, {\bf 14}: 397-408.
\bibitem{KlauckSchadschneider99} Klauck, K. and Schadschneider, A. (1999) On the ubiquity of matrix-product states in one-dimensional stochastic processes with boundary interactions. {\it Physica A},  {\bf 271}: 102-117.
\bibitem{liggett85} Liggett, T. M. (2005) {\it Interacting Particle Systems}. Springer-Verlag, Berlin.
\bibitem{liggett99} Liggett, T. M. (1999) {\it Stochastic Interacting Systems: Contact, Voter and Exclusion Processes}. Springer-Verlag, Berlin.
\bibitem{MacDonald68} MacDonald, C. T., Gibbs, J. H., and Pipkin, A. C. (1968) Kinetics of biopolymerization on nucleic acid templates. {\it Biopolymers: Original Research on Biomolecules}, {\bf 6}: 1-25.
\bibitem{PrahoferSpohn00} Pr{\" a}hofer, M. and Spohn, H. (2000) Current fluctuations for the totally asymmetric simple exclusion process. In: {\it In and out of equilibrium}, Mambucaba: 185-204.
\bibitem{Richards77} Richards, P. M. (1977) Theory of one-dimensional hopping conductivity and diffusion. {\it Physical Review B},  {\bf 16}: 1393.
\bibitem{RossiPastorVespignani00} Rossi, M., Pastor-Satorras, R., and Vespignani, A. (2000) Universality class of absorbing phase transitions with a conserved field. {\it Physical Review Letters}, {\bf 85}: 1803.
\bibitem{Rost81} Rost, H. (1981) Nonequilibrium behaviour of a many particle process: density profile and local equilibria. {\it Z. Wahrsch. Verw. Gebiete}, {\bf 58}, 41-53.
\bibitem{spitzer70} Spitzer, F. (1970) Interaction of Markov processes. {\it Advances in Math}, {\bf 5}, 246-290.
\end{thebibliography}
\end{document}